\documentclass[12pt]{amsart}

\usepackage[top=100pt,bottom=60pt,left=96pt,right=92pt]{geometry}

%%%%%%%%%%%%%%%%%%%%%%%%%%%%%%%%%%%%%%%%

\usepackage[mathscr]{eucal}
\usepackage{hyperref}
\usepackage{amsmath,amsthm,amssymb,amsfonts,enumerate,xcolor,layout,tikz,subcaption}

%%%%%%%%%%%%%%%%%%%%%%%%%%%%%%%%%%%%%%%%%%%%%%%%%

\newtheorem{theorem}{Theorem}[section]

\theoremstyle{definition}

\theoremstyle{plain}

\definecolor{linkblue}{rgb}{0,0,.6}
\definecolor{citered}{rgb}{.7,0,0}
\hypersetup{colorlinks =true, linkcolor=linkblue, citecolor = citered}

%%%%%%%%%%%%%%%%%%%%%%%%%%%%%%%%%%%%%%%%%%%%%%%%%%%%%%%%%%%%%%%

\DeclareMathOperator{\E}{Exp}
\DeclareMathOperator{\Ht}{\widetilde{H}}
\DeclareMathOperator{\Tb}{\overline{\Theta}}
\DeclareMathOperator{\Tt}{\widetilde{\Theta}}
\DeclareMathOperator{\TT}{\mathcal{T}}
\DeclareMathOperator{\PP}{\mathcal{P}}

 % \S is already occupied.

%%%%%%%%%%%%%%%%%%%%%%%%%%%%%%%%%%%%%%%%%%%%%%%%%%%%%%%%%%%%%%%

%%%%%%%%%%%%%%%%%%%%%%%%%%%%%%%%%%%%%%%%%%%%%%%%%%%%%%%%%%%%%%%

\begin{document}

\title[Theta Operators]{A proof of the Theta Operator Conjecture}
\author{Marino Romero}
\thanks{This work was supported by NSF DMS1902731}
\date{\today}

\begin{abstract}
In the context of the (generalized) Delta Conjecture and its compositional form, D'Adderio, Iraci, and Wyngaerd recently stated a conjecture relating two symmetric function operators, $D_k$ and $\Theta_k$.
We prove this Theta Operator Conjecture, finding it as a consequence of the five-term relation of Mellit and Garsia. 
 As a result, we find surprising ways of writing the $D_k$ operators.
\end{abstract}

\maketitle

\section{Introduction}

In what follows, we will assume the reader is familiar with symmetric functions and plethystic substitution. For a standard symmetric function reference, there is Macdonald's book \cite{Macdonald}. For some of the plethystic identities shown here, we will mostly reference  \cite{explicit} and \cite{positivity}. We will also adopt the notation and conventions in \cite{fiveterm}.
Garsia, Haiman, and Tesler defined in \cite{explicit} a family of plethystic operators $\{D_k\}_{k\in \mathbb{Z}}$ by setting
$$
D_k F[X] = F\left[X + \frac{M}{z} \right] \E[-zX] \Big|_{z^k}
$$
where 
$$\E[X] = \sum_{n \geq 0} h_n[X] = \exp\left(\sum_{k \geq 1} \frac{p_k}{k} \right)$$
is the plethystic exponential and $M=(1-q)(1-t)$. In the definition of $D_k$, we would have
$$
\E[-zX] = \sum_{k \geq 0 } (-z)^{k} e_k[X].
$$
For every partition $\mu$, set
$$
\Pi_\mu = \prod_{(i,j) \in \mu/(1)} (1-q^it^j)
$$
and define the linear operator $\Pi$ by setting 
$$
\Pi \Ht_\mu = \Pi_\mu \Ht_\mu.
$$
To get a compositional refinement of the (generalized) Delta Conjecture \cite{delta},  D'Adderio, Iraci, and Wyngaerd \cite{theta} define 
$$\Theta_k = \Pi~ \underline{e}_k^* ~\Pi^{-1}
$$
where $\underline{f}^*$ is multiplication by $f^*=f[X/M]$. 
\\

Conjecture 10.3 in \cite{theta} asserts that
$$
[\Theta_k, D_1] = \sum_{i=1}^k (-1)^{i} D_{i+1} \Theta_{k-i}.
$$
Multiplying both sides by $(-1)^k$ and expanding $[\Theta_k, D_1]$ as $\Theta_k D_1 - D_1 \Theta_k$ gives
$$
(-1)^k\Theta_k D_1= D_1 (-1)^k\Theta_k + \sum_{i=1}^k D_{i+1} (-1)^{k-i} \Theta_{k-i}.
$$
Let us set $\Tb_k = (-1)^k \Theta_k$, 
\begin{align*}
\Tb(z)  = \sum_{k \geq 0} z^k \Tb_k && \text{ and } && D_+(z) = \sum_{k \geq 1} z^k D_k.
\end{align*}
Then note that 
$$
D_+(z) \Tb(z) |_{z^{k+1}} = D_1 \Tb_k + D_2 \Tb_{k-1} + \cdots + D_{k+1} \Tb_0.
$$
The conjecture can then be rewritten as 
$$
z \Tb(z) D_1 = D_+(z) \Tb(z).
$$
This is what we will prove.
\section{The $D_k$ operators}
 Another way to write the $D_k$ operators is using the translation and multiplication operators $\TT_Y $ and $\PP_Z$, defined for any two expressions $Y$ and $Z$ by setting 
\begin{align*}
\TT_Y F[X] = F[X+Y] && \text{ and } && \PP_Z F[X] = \E[ZX]F[X].
\end{align*}
Then following the definition of $D_k$, we have as in \cite{explicit}
$$ 
\sum_{-\infty < k < \infty } z^k D_k = \PP_{-z} \TT_{\frac{M}{z}}.
$$

\section{The proof}
For any symmetric function $F$, define the linear operator $\Delta_F$ by setting \begin{align*}
\Delta_F \Ht_\mu = F[B_\mu] \Ht_\mu, && \text{ where} && B_\mu = \sum_{(i,j) \in \mu} q^i t^j. \end{align*}
This is the Delta eigenoperator for the modified Macdonald basis, defined in \cite{positivity}.
We will now follow the notation from Garsia and Mellit's five-term relation \cite{fiveterm}.
First note that if we set 
$$\Delta_u = \sum_{n \geq 0} (-u)^n\Delta_{e_n},$$ then we have 
\begin{align*}
\Delta_u \Ht_\mu = \Ht_\mu \prod_{(i,j)\in \mu} (1-uq^it^j) && \text{ and } && \Delta_{u}^{-1} = \sum_{ n \geq 0 } u^n \Delta_{h_n}.
\end{align*}
Furthermore, $\Pi = \Delta_u/(1-u) |_{u=1},$ and we can write
$$
\Tb_k = \Delta_u (-1)^k \underline{e}_k^* \Delta_u^{-1} \Big|_{u=1}.$$ 
Even though $\Delta_1^{-1}$ on its own is not well defined, one can still write in this case that $ \Tb(z) =\Delta_1 \PP_{-\frac{z}{M}} \Delta_1^{-1}$. We will not need this since the $u$'s will vanish in our calculations.
For this reason we will now, instead, consider the unspecialized operator
$$
\Tt_k = \Delta_v (-1)^k \underline{e}_k^* \Delta_v^{-1}
$$
and let 
$$
\Tt(z) = \sum_{k \geq 0}z^k \Tt_k = \Delta_v \PP_{-\frac{z}{M}} \Delta_{v}^{-1}
$$
for some monomial $v$.
We will show that
$$
z\Tt(z) D_1 =  D_+(z) \Tt(z),
$$
or rather
$$z\Tt(z) D_1 \Tt(z)^{-1} = D_+(z).
$$
In other words, we want to show that
$$
D_+(z) = z  \Delta_v \PP_{-\frac{z}{M}} \Delta_v^{-1} D_1  \Delta_v \PP_{\frac{z}{M}} \Delta_v^{-1}.
$$

Next, for $\mu \vdash n$, we set $\nabla \Ht_\mu = \Delta_u \Ht_\mu |_{u^n} = (-1)^n q^{n(\mu')} t^{n(\mu)} \Ht_\mu.$
With this convention, we have $D_1 = \nabla \underline{e}_1 \nabla^{-1}$ \cite{explicit}.  We can then rewrite the conjecture 
by substituting for $D_1$, giving
$$
D_+(z) = z \Delta_v \PP_{-\frac{z}{M}} \Delta_v^{-1} \nabla \underline{e}_1 \nabla^{-1}  \Delta_v \PP_{\frac{z}{M}} \Delta_v^{-1}.
$$

One of the main results from the five-term relation is the following identity:
\begin{theorem}[\cite{fiveterm}] For any two monomials $u$ and $v$, we have
$$
\nabla^{-1} \TT_{uv} \nabla = \Delta_v^{-1} \TT_u \Delta_{v} \TT_u^{-1}.
$$
\end{theorem}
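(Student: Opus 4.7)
My plan is to verify this operator identity by evaluating both sides on the modified Macdonald basis $\{\Ht_\mu\}$, since $\nabla$ and $\Delta_v$ are diagonal on this basis and only $\TT_u$ carries non-diagonal information. The identity should then collapse to a matching of matrix coefficients that is ultimately a rational-function identity in $q$, $t$, $u$, $v$, indexed by pairs of partitions $(\lambda, \mu)$.

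First, I would render the translation operator explicit in the $\Ht$-basis via the standard Macdonald-reciprocity / Pieri-type expansion $\TT_u \Ht_\mu = \sum_\lambda \tau_{\lambda,\mu}(u) \Ht_\lambda$. Substituting $u \mapsto uv$ gives the matrix form of $\TT_{uv}$ needed on the left. Then, using $\nabla \Ht_\mu = (-1)^{|\mu|} q^{n(\mu')} t^{n(\mu)} \Ht_\mu$ and $\Delta_v \Ht_\mu = \prod_{c \in \mu}(1 - v q^i t^j) \Ht_\mu$, I would compute the $(\lambda,\mu)$ matrix entry on each side. The left side becomes $\tau_{\lambda,\mu}(uv)$ rescaled by a ratio of $\nabla$-eigenvalues, while on the right, inserting a resolution of identity between $\TT_u$ and $\TT_u^{-1}$ produces a sum over intermediate $\nu$ of $\tau_{\lambda,\nu}(u)\,\tau_{\nu,\mu}(-u)$ weighted by a ratio of $\Delta_v$-eigenvalues indexed by $\nu$ and $\lambda$.

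Because $\TT_u \TT_u^{-1} = 1$, the ``$v$-free'' part of the right-hand sum collapses to $\delta_{\lambda,\mu}$, and the genuine $v$-dependence enters only through the $\Delta_v$-weights along the intermediate partitions. The identity therefore amounts to showing that this $v$-weighted convolution reproduces the substitution $u \mapsto uv$ in $\tau_{\lambda,\mu}$, up to the $\nabla$-eigenvalue prefactor. The main obstacle is precisely this combinatorial identity — it is the analytic content of the five-term relation. I would approach it by first verifying the boundary cases $u = 0$ and $v = 0$ (where both sides reduce to the identity), and then matching the power series expansions in $u$ and $v$ term by term, likely via an induction on $|\lambda| - |\mu|$ that exploits the hook-like structure of the Pieri coefficients $\tau_{\lambda,\mu}$.
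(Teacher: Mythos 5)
You should first note that the paper offers no proof of this statement at all: it is imported as a known result of Garsia and Mellit \cite{fiveterm} and used as a black box in the derivation of the main theorem, so there is no internal argument to compare yours against. The only question is whether your proposal stands on its own as a proof, and it does not. Expanding both sides in the modified Macdonald basis is harmless bookkeeping: the left side is diagonal conjugation of the matrix of $\TT_{uv}$ by the $\nabla$-eigenvalues, and inserting the basis between $\TT_u$ and $\TT_u^{-1}=\TT_{-u}$ on the right gives a $\Delta_v$-weighted convolution of the matrices of $\TT_u$ and $\TT_{-u}$. But at that point you have merely restated the theorem as an identity among the coefficients $\tau_{\lambda,\mu}$, and you say so yourself: ``the main obstacle is precisely this combinatorial identity --- it is the analytic content of the five-term relation.'' That obstacle is the entire theorem, and your plan for it (check $u=0$ and $v=0$, then match power series by an induction on $|\lambda|-|\mu|$) is not an argument. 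The boundary specializations only pin down constant terms of a two-variable polynomial identity in each matrix entry and carry essentially no information about the general coefficients, and no inductive step is formulated, let alone verified.

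There is also a structural reason to doubt that the sketched induction is routine. The coefficients $\tau_{\lambda,\mu}(u)$, i.e.\ the $\Ht_\lambda$-expansion of $\Ht_\mu[X+u]$, are not ordinary Pieri coefficients with a simple hook product form; they are polynomials in $u$ of degree $|\mu|-|\lambda|$ whose closed forms already encode the Macdonald Pieri/reciprocity machinery, and the convolution identity you need --- that weighting the intermediate partition by ratios of $\Delta_v$-eigenvalues turns the $u$-convolution into the substitution $u\mapsto uv$ twisted by $\nabla$-eigenvalues --- is exactly the nontrivial content that Garsia and Mellit establish, not by elementary induction on matrix entries, but via summation formulas for Macdonald polynomials (and, in Mellit's earlier work, via the Dyck path algebra). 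So your proposal reduces the theorem to an equivalent unproven statement and defers the essential step. For the purposes of this paper a citation to \cite{fiveterm} suffices; if you want an actual proof, you must supply the coefficient identity, which is where all the work lies.
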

The dual version is given by translating $\TT_z$ to $\PP_{-z/M}$ and reversing the order:
$$
\nabla \PP_{-\frac{uv}{M}} \nabla^{-1} =        \PP_{\frac{u}{M}}              \Delta_v \PP_{-\frac{u}{M} } \Delta_{v}^{-1}.
$$
We get
$$
  \Delta_v \PP_{-\frac{z}{M} } \Delta_{v}^{-1} =   \PP_{-\frac{z}{M}}\nabla \PP_{-\frac{zv}{M}} \nabla^{-1} 
$$
and the inverse formula
$$
   \Delta_v \PP_{\frac{z}{M} } \Delta_{v}^{-1} =  \nabla \PP_{\frac{zv}{M}} \nabla^{-1}  \PP_{\frac{z}{M}}.
$$

Substituting this in our conjectured formula, we get
\begin{align*}
\Delta_v \PP_{-\frac{z}{M}} \Delta_v^{-1} \nabla \underline{e}_1 \nabla^{-1}  \Delta_v \PP_{\frac{z}{M}} \Delta_v^{-1}  && =
&  \PP_{-\frac{z}{M}}\nabla \PP_{-\frac{zv}{M}} \nabla^{-1}  \nabla \underline{e}_1 \nabla^{-1}  \nabla \PP_{\frac{zv}{M}} \nabla^{-1} \PP_{\frac{z}{M}} \\
&& =  & 
  \PP_{-\frac{z}{M}}\nabla \PP_{-\frac{zv}{M}}  \underline{e}_1\PP_{\frac{zv}{M}} \nabla^{-1} \PP_{\frac{z}{M}} 
  \\
 && =  &  \PP_{-\frac{z}{M}}\nabla \underline{e}_1 \nabla^{-1} \PP_{\frac{z}{M}} 
   \\ &&=&    \PP_{-\frac{z}{M}}D_1 \PP_{\frac{z}{M}} .
\end{align*}
We can now write the final form of the conjecture. It is an extension of Proposition 1.6 in \cite{explicit}.
\begin{theorem}
$$
D_+(z) = \sum_{k \geq 1} z^k D_k =  z \PP_{-\frac{z}{M}} D_1 \PP_{\frac{z}{M}} 
$$
or 
$$
D_+(z) =  z\PP_{-\frac{z}{M}} \nabla \underline{e}_1 \nabla^{-1} \PP_{\frac{z}{M}} 
$$
\end{theorem}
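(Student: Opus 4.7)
The plan is to verify the first form of the theorem, $D_+(z) = z \PP_{-z/M} D_1 \PP_{z/M}$, by a direct generating-function calculation; the second form then follows by substituting $D_1 = \nabla \underline{e}_1 \nabla^{-1}$. The starting point is the identity $\sum_{k \in \Z} w^k D_k = \PP_{-w} \TT_{M/w}$ recalled in Section~2, which lets us extract $D_1$ as the coefficient of $w^1$ in $\PP_{-w} \TT_{M/w}$, and then compute $\PP_{-z/M} D_1 \PP_{z/M}$ operator-by-operator.

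The essential step is a commutation rule between $\TT_Y$ and $\PP_Z$. Unpacking the definitions and using $\E[Z(X+Y)] = \E[ZX]\,\E[ZY]$, one finds
$$\TT_Y \PP_Z = \E[ZY] \, \PP_Z \TT_Y,$$
which specializes, with $Y = M/w$ and $Z = z/M$, to $\TT_{M/w}\PP_{z/M} = \E[z/w]\, \PP_{z/M}\TT_{M/w}$, where $\E[z/w] = \sum_{j \geq 0}(z/w)^j$.

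Using this commutation together with $\PP_a \PP_b = \PP_{a+b}$ and $\PP_{-z/M}\PP_{z/M} = I$, the triple product $\PP_{-z/M} D_1 \PP_{z/M}$ collapses to $[w^1]\,\E[z/w]\,\PP_{-w}\TT_{M/w}$. Substituting the generating function for the $D_k$ and picking out the coefficient of $w^1$ forces the index constraint $k - j = 1$ and returns $\sum_{j \geq 0} z^j D_{j+1} = D_+(z)/z$, which rearranges to the required identity.

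I do not expect any real obstacle: once the commutation $\TT_Y\PP_Z = \E[ZY]\,\PP_Z\TT_Y$ is in hand, the rest is bookkeeping. The one point to be careful about is that $\TT_{M/w}$ introduces negative powers of $w$, so coefficient extraction and the formal series $\E[z/w]$ must be interpreted as Laurent series in $w$; this is well-defined because on any symmetric polynomial, $\TT_{M/w}$ produces only finitely many negative powers of $w$.
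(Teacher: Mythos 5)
Your proposal is correct and follows essentially the same route as the paper: both rest on the commutation $\TT_Y\PP_Z = \E[YZ]\,\PP_Z\TT_Y$ applied to the generating function $\sum_k w^k D_k = \PP_{-w}\TT_{M/w}$, followed by extraction of the coefficient of $w$ from the geometric series $\E[z/w]$. The only difference is cosmetic: the paper keeps the index $r$ general and equates coefficients of $z^k u^r$ to get $\PP_{-z/M} D_r \PP_{z/M}\big|_{z^k} = D_{k+r}$, whereas you specialize to $r=1$ from the start.
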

\begin{proof} We will show that for any $r$
$$
\PP_{- \frac{z}{M}} D_r \PP_{\frac{z}{m}} = \sum_{k \geq 0} z^k D_{k+r}.
$$
In particular for $r=1$, this proves the theorem.
Following \cite{explicit}, for any two expressions $Y$ and $Z$, we have 
\begin{align*}
\TT_Y \PP_Z  ~ F[X] & = \TT_Y  ~ \E[XZ] F[X] \\
& =  \E[(X+Y)Z] F[X+Y] \\
& = \E[YZ] \E[XZ] F[X+Y] \\
& = \E[YZ] \PP_Z \TT_Y  ~ F[X],
\end{align*}
meaning
$$
\TT_Y \PP_Z = \E[YZ] \PP_Z \TT_Y.
$$
Therefore, we have
\begin{align*}
 \PP_{-\frac{z}{M}} \PP_{-u} \TT_{\frac{M}{u}} \PP_{\frac{z}{M}}  & =  \PP_{-\frac{z}{M}} \PP_{-u} \E\left[\frac{z}{u}\right] \PP_{\frac{z}{M}}\TT_{\frac{M}{u}} \\
& =   \frac{1}{1-z/u} \PP_{-u} \TT_{\frac{M}{u}}.
\end{align*}
The coefficient of $z^{k} u^r$ (with $k \geq 0$) on the right hand side  of the last equality is
$$
\PP_{-u}\TT_{\frac{M}{u}} \Big |_{u^{k+r}} = D_{k+r}.
$$
Equating coefficients on both sides, we have
$$
\PP_{-\frac{z}{M}} D_r  \PP_{\frac{z}{M}} \Big|_{z^{k}} = D_{k+r}.
$$

\end{proof}

\bibliographystyle{abbrv}
\bibliography{ThetaOperatorsBib}

{\small
  \noindent
  \\
  Marino Romero\\
  University of Pennsylvania\\
  Department of Mathematics\\
  {\em E\--mail}: \texttt{mar007@sas.upenn.edu}
}

\end{document}